\theoremstyle{plain}
\newtheorem{Theorem}{Theorem}[section]
\newtheorem{Lemma}[Theorem]{Lemma}
\newtheorem{Corollary}[Theorem]{Corollary}
\newtheorem{Remark}[Theorem]{Remark}
\def \N{\mathbb{N}}
\def \P{\mathbb{P}}
\def \R{\mathbb{R}}
\def \Nc{{\mathcal N}}
\def \Pc{{\mathcal P}}
\def \Wc{{\mathcal W}}
\title{Mean convergence rates for Gaussian-smoothed Wasserstein distances and classical Wasserstein distances}
\author{
Andrea COSSO\footnote{Universit\`a degli Studi di Milano; andrea.cosso@unimi.it}\quad\qquad Mattia MARTINI\footnote{Université C\^ote d'Azur, CNRS, Laboratoire J. A. Dieudonné; mattia.martini@univ-cotedazur.fr}\quad\qquad
Laura PERELLI\footnote{Universit\`a degli Studi di Milano; laura.perelli@unimi.it}}
\date{February 3, 2026}
\begin{document}
\maketitle

\begin{abstract}
\noindent We establish upper bounds for the expected $p$-th power of the Gaussian-smoothed $p$-Wasserstein distance between a probability measure $\mu$ and the corresponding empirical measure $\mu_N$, whenever $\mu$ has finite $q$-th moment for some $q>p$. This generalizes recent results that were valid only for $q>2p+2d$. We provide two distinct proofs of such a result. We also investigate the optimality of these bounds by establishing a lower bound of order $N^{-1/2-\varepsilon}$ for a probability measure possessing finite moments of all orders. Finally, we exploit a third upper bound for the Gaussian-smoothed $p$-Wasserstein distance to derive new convergence rates for the classical $p$-Wasserstein distance in the critical regime where $\mu$ has finite $p$-th moment but infinite moments of order $q > p$, covering for instance the case of Zygmund classes $L^p(\log L)^\alpha$.
\end{abstract}

\vspace{5mm}

\noindent{\bf Keywords:} empirical measure; Wasserstein
distance; Gaussian-smoothed Wasserstein\\ distance.

\vspace{5mm}

\noindent{\bf Mathematics Subject Classification (2020):} 60B10, 60F25, 60G57.

\section{Introduction}

\emph{Background and motivation.} Let $d\geq1$ and let $\Pc(\R^d)$ denote the set of all probability measures on the Borel subsets of $\R^d$. For $p\geq1$, the Wasserstein distance $\Wc_p$ on $\Pc(\R^d)$ is defined as
\[
\Wc_p(\mu,\nu) \ = \ \inf\bigg\{\bigg(\int_{\R^d\times\R^d}|x-y|^p\,\pi(dx,dy)\bigg)^{1/p}\colon\pi\in\Pi(\mu,\nu)\bigg\}, \qquad \mu,\nu\in\Pc(\R^d),
\]
where $\Pi(\mu,\nu)$ is the family of all couplings of $\mu$ and $\nu$, that is the set of all probability measures on the Borel subsets of $\R^d\times\R^d$ with marginals $\mu$ and $\nu$. For further details on the Wasserstein distance, see for instance \cite{AGS08,CD18_I,Vi09}. Given $\mu\in\Pc(\R^d)$ and $N\geq1$, we consider the corresponding empirical measure
\[
\mu_N \ = \ \frac{1}{N}\sum_{k=1}^N \delta_{X_k},
\]
where $(X_k)_{k\geq1}$ is an i.i.d. sequence of $\mu$-distributed random variables. It is well-known (see \cite{Varad}) that Glivenko-Cantelli's theorem holds: $\mu_N$ weakly converges to $\mu$ almost surely as $N\rightarrow\infty$. Estimating the distance between $\mu_N$ and $\mu$ is a fundamental problem in probability, statistics, and machine learning, particularly when the Wasserstein distance is used. Significant contributions on this topic include \cite{Dudley,Ajtai,DobricYukich}, as well as more recent works \cite{BolleyGV,BoissardLeGouic,Dereich,FG15,BobkovLedoux,WeedBach,Lei,Lacker}, and book references such as \cite{CD18_I,ChewiWeedRigollet}. The Wasserstein distance between $\mu_N$ and $\mu$ is, however, often affected by the curse of dimensionality when $d>2p$ and the rate deteriorates to $N^{-p/d}$.

\vspace{2mm}

\noindent\emph{Gaussian smoothing.} To mitigate the curse of dimensionality, recent literature has focused on the \emph{Gaussian-smoothed Wasserstein distance} \cite{GoldfeldGreenewald,GGK20,GGNWP20,Goldfeld_et_al,Nietert_et_al,BlockJPR}.
Let $\mathcal{N}_\sigma$ denote the centered normal distribution with covariance $\sigma^2 I_d$. The density of $\Nc_\sigma$, denoted $\varphi^\sigma$, is given by
\begin{equation}\label{varphi}
\varphi^\sigma(x) \ = \ \frac{1}{(2\pi)^{d/2}\sigma^d}\,\text{e}^{-\frac{1}{2\sigma^2}|x|^2}, \qquad \forall\,x\in\R^d.
\end{equation}
We use the notation $\varphi$ instead of $\varphi^1$ when $\sigma=1$. The smoothed distance is defined as
\[
\mathcal{W}_p^{(\sigma)}(\mu,\nu) \ \coloneqq \ \mathcal{W}_p(\mu*\mathcal{N}_\sigma,\nu*\mathcal{N}_\sigma).
\]

\noindent\emph{Existing literature on rates for Gaussian-smoothed distance.} This distance was already used for deriving an upper bound for the classical 2-Wasserstein distance in \cite{HK94}, although such a distance was not formally defined at the time; on the other hand, smoothed empirical measures were first considered in \cite{Yukich} (here smoothing is not necessarily Gaussian), see also \cite[Chapter 2]{HighDimProbII}. Recently (see \cite[Theorem 2]{Nietert_et_al}) it was proved that
\begin{equation}
\mathbb E\big[\Wc_p^{(\sigma)}(\mu_N,\mu)\big] \ = \ O\bigg(\frac{1}{N^{1/(2p)}}\bigg)
\end{equation}
whenever
\[
\int_0^\infty r^{d+p-1}\sqrt{\P(|X|>r)}\,dr \ < \ \infty, \qquad\text{where } X\sim\mu,
\]
which holds when $\mu$ has finite $q$-th moment for $q>2p+2d$. In the case where $\mu$ is sub-Gaussian, i.e., there exists a constant $K>0$ such that
\[
\mathbb E\big[\text{e}^{\langle u,X-\mathbb E[X]\rangle}\big] \ \leq \ \text{e}^{\frac{1}{2}K^2|u|^2}, \quad \forall\,u\in\R^d,\qquad X\sim\mu,
\]
the rate of convergence can improve to $1/N$ (see \cite{BlockJPR} for a thorough analysis of this case).

\vspace{2mm}

\noindent\emph{Main contribution for the Gaussian-smoothed distance and methodology.} In this paper, we derive an upper bound for $\mathbb E[(\Wc_p^{(\sigma)}(\mu_N,\mu))^p]$ whenever $\mu$ has finite $q$-th moment, for any $q>p$, see Theorem \ref{T:RateSmooth}. Notice that, by Jensen's inequality, $\mathbb E[(\Wc_p^{(\sigma)}(\mu_N,\mu))]\leq\mathbb E[(\Wc_p^{(\sigma)}(\mu_N,\mu))^p]^{1/p}$, so that Theorem \ref{T:RateSmooth} gives a rate of convergence also for $\mathbb E[(\Wc_p^{(\sigma)}(\mu_N,\mu))]$. We provide two different proofs of Theorem \ref{T:RateSmooth}. First, we combine the original approach from \cite{HK94}, based on a Carlson-type inequality (that here we need to generalize, see Lemma \ref{L:Carlson}) with the Marcinkiewicz–Zygmund inequality (Lemma \ref{L:MZ_Ineq}). Second, we apply the dyadic partitioning argument (see for instance \cite{ChewiWeedRigollet}), which is crucial for obtaining sharp bounds in the classical Wasserstein distance case; in the present context, this argument simplifies significantly, and we combine it with the Marcinkiewicz–Zygmund inequality. Since $\Wc_p^{(\sigma)}(\mu,\nu)\leq\Wc_p(\mu,\nu)$, for every $\mu,\nu\in\mathcal P_p(\mathbb R^d)$, we can combine Theorem \ref{T:RateSmooth} with the well-known rates for the classical Wasserstein distance (see \cite[Theorem 1]{FG15}) to get a sharper result. In particular, Theorem \ref{T:RateSmooth} yields a better rate than \cite[Theorem 1]{FG15} whenever $p\leq d/2$ and $q>2dp/(d-p)$. Moreover, this rate achieves the dimension-free bound $N^{-1/2}$ when $q>2p+d$. See Remark \ref{R:FG} for a more detailed analysis of these rates.

\vspace{2mm}

\noindent\emph{Optimality of the convergence rates.} Complementing our analysis of upper bounds, we also investigate the sharpness of the derived rates for the Gaussian-smoothed Wasserstein distance. We establish a lower bound demonstrating that there exists a probability measure with moments of all orders such that, for any $\varepsilon > 0$, $\mathbb E[(\Wc_p^{(\sigma)}(\mu_N,\mu))^p]$ behaves at least as $N^{-1/2-\varepsilon}$.

\vspace{2mm}

\noindent\emph{Application to the Wasserstein distance.} It is easy to see (Lemma \ref{L:Bound}) that
\[
\Wc_p(\mu_N,\mu) \ \leq \ C_{p,d}\,\sigma+\Wc_p^{(\sigma)}(\mu_N,\mu),
\]
so, by taking $\sigma=\sigma_N\rightarrow0$, we can obtain an upper bound for $\Wc_p$ from the bound for $\Wc_p^{(\sigma)}$. We derive this result using another upper bound for $\mathbb E[(\Wc_p^{(\sigma)}(\mu_N,\mu))^p]$, established via a truncation argument. Such a methodology yields an upper bound in the critical case where $\mu$ possesses no finite moments of order $q>p$ (see Theorem \ref{T:RateG}), for which there are no results in the literature, except in the one-dimensional case, for which we refer to \cite{BGM,BobkovLedoux}. In particular, we specialize Theorem \ref{T:RateG} to the case where $\mu$ satisfies
\[
\int_{\mathbb R^d}|x|^p\big(\log(1+|x|)\big)^\alpha\,\mu(dx) \ < \ \infty,
\]
for some $p\in[1,\infty)$ and $\alpha>0$. Equivalently, if $X\sim\mu$ then $X$ belongs to the Zygmund space $L^p(\log L)^\alpha$. In such a context, we obtain the rate of convergence \eqref{UpperBoundWassZygmund}.

\vspace{2mm}

\noindent\emph{Organization.} Section 2 presents the main results for the Gaussian-smoothed distance, including the two proofs of the upper bound and the derivation of the lower bound. Section 3 presents the application to the classical Wasserstein distance for distributions with critical moment behavior.

\section{Gaussian-smoothed Wasserstein distance}

For every $p\geq1$ we denote by $\Pc_p(\R^d)$ the subset of $\Pc(\R^d)$ of all probability measures having finite $p$-th moment. For $\mu\in\Pc(\R^d)$ we denote
\[
M_p(\mu) \ = \ \bigg(\int_{\R^d}|x|^p\,\mu(dx)\bigg)^{1/p}.
\]
We recall from \cite[Proposition 1]{Nietert_et_al} and \cite[Lemma 4.2]{MKV_Uniq} that $(\Pc_p(\R^d),\Wc_p^{(\sigma)})$ is a complete, separable, metric space with the same topology as $(\Pc_p(\R^d),\Wc_p)$. We also have the following result.

\begin{Lemma}\label{L:Bound}
For any $p\geq1$, $\sigma>0$ and $\mu,\nu\in\Pc_p(\R^d)$, it holds that
\[
\Wc_p(\mu,\nu) \ \leq \ C_{p,d}\,\sigma+\Wc_p^{(\sigma)}(\mu,\nu),
\]
with $C_{p,d}=2^{3/2}\left(\frac{\Gamma(\frac{p+d}{2})}{\Gamma(\frac{d}{2})}\right)^{1/p}$, where $\Gamma(\cdot)$ denotes the usual gamma function.
\end{Lemma}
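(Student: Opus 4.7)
The plan is to use the triangle inequality for $\Wc_p$ to insert the two convolved measures, then bound each of the two ``smoothing'' distances $\Wc_p(\mu,\mu*\Nc_\sigma)$ and $\Wc_p(\nu,\nu*\Nc_\sigma)$ by constructing an explicit coupling via an independent Gaussian shift, and finally compute the $p$-th moment of a $d$-dimensional centered Gaussian using the chi distribution.

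More concretely, first I would note that by the triangle inequality
\[
\Wc_p(\mu,\nu) \ \leq \ \Wc_p(\mu,\mu*\Nc_\sigma) + \Wc_p(\mu*\Nc_\sigma,\nu*\Nc_\sigma) + \Wc_p(\nu*\Nc_\sigma,\nu),
\]
where the middle term is exactly $\Wc_p^{(\sigma)}(\mu,\nu)$. Then, for the first term, I would let $X\sim\mu$ and take $Z\sim\Nc_\sigma$ independent of $X$, so that $(X,X+Z)$ is a coupling of $\mu$ and $\mu*\Nc_\sigma$; this yields
\[
\Wc_p^p(\mu,\mu*\Nc_\sigma) \ \leq \ \E\big[|X-(X+Z)|^p\big] \ = \ \E[|Z|^p],
\]
and the same coupling-with-$Z$ argument gives the identical bound for $\Wc_p(\nu,\nu*\Nc_\sigma)$.

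Next I would compute $\E[|Z|^p]$ explicitly. Writing $Z=\sigma Z'$ with $Z'\sim\Nc_1$, I have $|Z'|^2\sim\chi_d^2$, hence $|Z'|$ follows a chi distribution with $d$ degrees of freedom, whose $p$-th moment is the standard identity
\[
\E[|Z'|^p] \ = \ \frac{2^{p/2}\,\Gamma(\frac{p+d}{2})}{\Gamma(\frac{d}{2})},
\]
obtained by passing to polar coordinates in the Gaussian density. Consequently,
\[
\Wc_p(\mu,\mu*\Nc_\sigma) \ \leq \ \sigma\,2^{1/2}\bigg(\frac{\Gamma(\frac{p+d}{2})}{\Gamma(\frac{d}{2})}\bigg)^{1/p},
\]
and the analogous bound holds for the $\nu$-term. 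Adding the two contributions and recognizing the factor $2\cdot 2^{1/2}=2^{3/2}$ recovers exactly the claimed constant $C_{p,d}$.

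I do not anticipate a significant obstacle: the only small points to verify are that $\mu*\Nc_\sigma$ and $\nu*\Nc_\sigma$ lie in $\Pc_p(\R^d)$ (so the triangle inequality applies), which is immediate from the elementary bound $|x+z|^p\leq 2^{p-1}(|x|^p+|z|^p)$ together with the finite $p$-th moment of $\Nc_\sigma$, and the precise numerical identification of $\E[|Z'|^p]$ via the chi-distribution formula. Once these are in place, the result follows from the three-step chain above.
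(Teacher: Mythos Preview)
Your proposal is correct and follows essentially the same argument as the paper: triangle inequality to insert the smoothed measures, the coupling $(X,X+Z)$ with $Z\sim\Nc_\sigma$ independent, and the explicit evaluation of $\E[|Z|^p]$ via the Gaussian moment formula $\|\Nc_1\|_p^p = 2^{p/2}\Gamma(\frac{p+d}{2})/\Gamma(\frac{d}{2})$. The only cosmetic difference is that the paper invokes a tabulated integral identity while you phrase it through the chi distribution; the content is identical.
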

\begin{proof}
Consider random variables $X\sim\mu$, $Y\sim\nu$, $Z\sim\Nc_\sigma$, with $Z$ independent of $(X,Y)$. Notice that $(X,X+Z)$ is a coupling of $\mu$ and $\mu*\Nc_\sigma$. Similarly,  $(Y,Y+Z)$ is a coupling of $\nu$ and $\nu*\Nc_\sigma$. The triangle inequality, then, gives
\begin{align*}
\Wc_p(\mu,\nu) \ &\leq \ \Wc_p(\mu,\mu*\Nc_\sigma) + \Wc_p(\nu,\nu*\Nc_\sigma) + \Wc_p(\mu*\Nc_\sigma,\nu*\Nc_\sigma) \\
&\leq \ \|X-(X+Z)\|_{L^p} + \|Y-(Y+Z)\|_{L^p} + \Wc_p(\mu*\Nc_\sigma,\nu*\Nc_\sigma) \\
&= \ 2\|Z\|_{L^p} + \Wc_p(\mu*\Nc_\sigma,\nu*\Nc_\sigma) \ = \ 2\sigma M_p(\Nc_1) + \Wc_p(\mu*\Nc_\sigma,\nu*\Nc_\sigma).
\end{align*}
Thus, the claim follows from the equalities (here we use \cite[formula 4.642]{GradshteinRyzhik})
\begin{equation}\label{Moments_N_1}
M_p^p(\Nc_1) \ = \ \int_{\R^d}|z|^p \frac{1}{(2\pi)^{d/2}}\text{e}^{-\frac{1}{2}|z|^2}\,dz \ = \ \frac{2^{p/2}}{\Gamma(\frac{d}{2})}\int_0^\infty r^{\frac{p+d}{2}-1}\text{e}^{-r}\,dr \ = \ 2^{p/2}\frac{\Gamma(\frac{p+d}{2})}{\Gamma(\frac{d}{2})}.
\end{equation}
\end{proof}

\noindent Let $\mu^\sigma\coloneqq\mu*\Nc_\sigma$ and $\mu_N^\sigma\coloneqq\mu_N*\Nc_\sigma$. Then, both $\mu^\sigma$ and the random measure $\mu_N^\sigma$ are absolutely continuous with densities
\begin{equation}\label{Dens_g}
g^\sigma(x) \ = \ \int_{\R^d}\varphi^\sigma(x-y)\,\mu(dy), \qquad g_N^\sigma(x) \ = \ \frac{1}{N}\sum_{k=1}^N \varphi^\sigma(x-X_k), \qquad \forall\,x\in\R^d,
\end{equation}
where $\varphi^\sigma$ is given by \eqref{varphi} and $(X_k)_{k\geq1}$ is an i.i.d. sequence of random variables with $X_k\sim\mu$. Therefore, the Wasserstein distance between $\mu_N^\sigma$ and $\mu^\sigma$ admits an upper bound in terms of $g_N^\sigma$ and $g^\sigma$, as stated in the following lemma (see \cite[Theorem 1.6]{ChewiWeedRigollet}).

\begin{Lemma}\label{L:UpperBoundDensities}
Let $\mu,\nu\in\Pc_p(\R^d)$ be two probability measures with densities $f$ and $g$ respectively. Then, for every $p\geq1$, it holds
\[
\Wc_p^p(\mu,\nu) \ \leq \ 2^{p-1}\int_{\R^d} |x|^p\,|f(x)-g(x)|\,dx.
\]
\end{Lemma}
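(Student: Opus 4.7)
The plan is to construct an explicit coupling of $\mu$ and $\nu$ that keeps the common mass on the diagonal and independently redistributes only the discrepancy between $f$ and $g$, then bound the transport cost by the elementary inequality $|x-y|^p\leq 2^{p-1}(|x|^p+|y|^p)$.

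First, I would introduce the pointwise minimum $h(x)=\min(f(x),g(x))$ and note the identities $f-h=(f-g)_+$ and $g-h=(g-f)_+$. Since $f$ and $g$ both integrate to $1$, setting $\alpha\coloneqq\int_{\R^d}(f-g)_+\,dx = \int_{\R^d}(g-f)_+\,dx = \tfrac12\int_{\R^d}|f-g|\,dx$ gives a common total mass of the two positive parts. If $\alpha=0$ then $\mu=\nu$ and the inequality is trivial, so I would assume $\alpha>0$ from here on.

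Next I would define the coupling $\pi\in\Pi(\mu,\nu)$ on $\R^d\times\R^d$ with density
\[
\pi(x,y) \ = \ h(x)\,\delta_{\{x=y\}} \ + \ \frac{1}{\alpha}\,(f(x)-g(x))_+\,(g(y)-f(y))_+,
\]
where the first term denotes the diagonal measure with mass $h(x)\,dx$. A quick check of the marginals confirms $\pi\in\Pi(\mu,\nu)$: the $x$-marginal is $h(x)+(f(x)-g(x))_+=f(x)$, and similarly the $y$-marginal is $g(y)$. Integrating $|x-y|^p$ against $\pi$, the diagonal part contributes $0$, so
\[
\Wc_p^p(\mu,\nu) \ \leq \ \frac{1}{\alpha}\int_{\R^d}\int_{\R^d}|x-y|^p\,(f(x)-g(x))_+\,(g(y)-f(y))_+\,dx\,dy.
\]

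The final step is to apply $|x-y|^p\leq 2^{p-1}(|x|^p+|y|^p)$ and split the double integral. Using $\int(f-g)_+\,dx=\int(g-f)_+\,dy=\alpha$ to integrate out the inactive variable in each of the two resulting terms, the factor $1/\alpha$ cancels and the sum becomes
\[
2^{p-1}\bigg[\int_{\R^d}|x|^p(f(x)-g(x))_+\,dx+\int_{\R^d}|y|^p(g(y)-f(y))_+\,dy\bigg] \ = \ 2^{p-1}\int_{\R^d}|x|^p|f(x)-g(x)|\,dx,
\]
which is exactly the claimed bound. I do not foresee a serious obstacle here: the only mildly delicate point is writing the diagonal part of $\pi$ rigorously (as the pushforward of $h(x)\,dx$ under $x\mapsto(x,x)$) and handling the degenerate case $\alpha=0$, both of which are routine.
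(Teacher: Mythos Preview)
Your argument is correct and is precisely the standard coupling proof of this inequality: the paper itself does not spell out a proof but simply refers to \cite[Theorem 1.6]{ChewiWeedRigollet}, whose argument is exactly the one you give (keep the common mass $\min(f,g)$ on the diagonal, couple the residual parts independently, and apply $|x-y|^p\le 2^{p-1}(|x|^p+|y|^p)$). There is nothing to add beyond your own remarks about handling $\alpha=0$ and writing the diagonal part rigorously.
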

% \begin{proof}
% See \cite[Theorem 1.6]{ChewiWeedRigollet}.
% \end{proof}

\noindent Next lemma concerns a Carlson-type inequality, which generalizes \cite[Lemma 2.1]{HK94}.

\begin{Lemma}\label{L:Carlson}
Let $g\colon\R^d\rightarrow\R$ be a non-negative, measurable function. Then, for $\beta>1$ and $\alpha>d(\beta-1)$, it holds
\begin{equation}\label{Carlson}
\int_{\R^d} g(x)\,dx \ \leq \ C_{\alpha,\beta,d} \bigg(\int_{\R^d} g(x)^\beta\,dx\bigg)^{\frac{\alpha-d(\beta-1)}{\alpha\beta}} \bigg(\int_{\R^d} |x|^\alpha\,g(x)^\beta\,dx\bigg)^{\frac{d(\beta-1)}{\alpha\beta}},
\end{equation}
with
\[
C_{\alpha,\beta,d} \ = \ \bigg(\frac{\alpha}{d(\beta-1)}\bigg)^{1/\beta}\bigg(\frac{d(\beta-1)}{\alpha-d(\beta-1)}\bigg)^{\frac{\alpha-d(\beta-1)}{\alpha\beta}}\bigg(\frac{2\pi^{d/2}}{\Gamma(\frac{d}{2})}\frac{1}{\alpha}I_{\alpha,\beta,d}\bigg)^{\frac{\beta-1}{\beta}},
\]
where
\[
I_{\alpha,\beta,d} \ = \ \int_0^\infty\frac{s^{\frac{d}{\alpha}-1}}{(1+s)^{\frac{1}{\beta-1}}}\,ds \ = \ \frac{\Gamma\big(\frac{d}{\alpha}\big) \Gamma\big(\frac{1}{\beta-1} - \frac{d}{\alpha}\big)}{\Gamma\big(\frac{1}{\beta-1}\big)}.
\]
\end{Lemma}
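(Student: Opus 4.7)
My plan is to prove the inequality by a single application of Hölder's inequality with a carefully chosen weight, followed by optimization of a free parameter. We may assume that both integrals on the right-hand side are finite, since otherwise the statement is trivial.

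For $t>0$ to be chosen later, introduce the weight $w_t(x)=(1+t|x|^{\alpha})^{1/\beta}$. Write
\[
\int_{\R^d} g(x)\,dx \ = \ \int_{\R^d} \frac{1}{w_t(x)}\,w_t(x)g(x)\,dx
\]
and apply Hölder with conjugate exponents $\beta/(\beta-1)$ and $\beta$ to obtain
\[
\int_{\R^d} g(x)\,dx \ \leq \ \bigg(\int_{\R^d}(1+t|x|^\alpha)^{-\frac{1}{\beta-1}}dx\bigg)^{\frac{\beta-1}{\beta}}\bigg(\int_{\R^d}g(x)^\beta dx + t\int_{\R^d}|x|^\alpha g(x)^\beta dx\bigg)^{\frac{1}{\beta}}.
\]

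Next I compute the first factor explicitly. Passing to spherical coordinates gives the surface factor $\frac{2\pi^{d/2}}{\Gamma(d/2)}$, and the change of variables $s=tr^\alpha$ (which is where $\alpha>d(\beta-1)$ is used, to ensure convergence at infinity of the integral $I_{\alpha,\beta,d}$) yields
\[
\int_{\R^d}(1+t|x|^\alpha)^{-\frac{1}{\beta-1}}dx \ = \ \frac{2\pi^{d/2}}{\alpha\,\Gamma(d/2)}\,I_{\alpha,\beta,d}\;t^{-d/\alpha}.
\]
Denoting $A=\int g^\beta dx$ and $B=\int |x|^\alpha g^\beta dx$, the previous bound becomes $\int g\,dx \leq K\,t^{-\frac{d(\beta-1)}{\alpha\beta}}(A+tB)^{1/\beta}$ with $K=\bigl(\frac{2\pi^{d/2}}{\alpha\Gamma(d/2)}I_{\alpha,\beta,d}\bigr)^{(\beta-1)/\beta}$.

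It remains to minimize the right-hand side in $t>0$. A direct differentiation gives the optimizer
\[
t^* \ = \ \frac{d(\beta-1)}{\alpha-d(\beta-1)}\,\frac{A}{B},
\]
so that $A+t^*B = \frac{\alpha}{\alpha-d(\beta-1)}\,A$. Substituting $t^*$ back produces exactly the product $A^{\frac{\alpha-d(\beta-1)}{\alpha\beta}}\,B^{\frac{d(\beta-1)}{\alpha\beta}}$, which matches the form claimed in \eqref{Carlson}. The main obstacle is purely bookkeeping: verifying that the three numerical factors coming from $K$, from $(t^*)^{-d(\beta-1)/(\alpha\beta)}$, and from $(A+t^*B)^{1/\beta}$ combine to give precisely the stated constant $C_{\alpha,\beta,d}$. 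This amounts to checking, after taking logarithms and using $\alpha = d(\beta-1)+(\alpha-d(\beta-1))$, the algebraic identity
\[
\Big(\tfrac{\alpha-d(\beta-1)}{d(\beta-1)}\Big)^{\frac{d(\beta-1)}{\alpha\beta}}\Big(\tfrac{\alpha}{\alpha-d(\beta-1)}\Big)^{1/\beta} \ = \ \Big(\tfrac{\alpha}{d(\beta-1)}\Big)^{1/\beta}\Big(\tfrac{d(\beta-1)}{\alpha-d(\beta-1)}\Big)^{\frac{\alpha-d(\beta-1)}{\alpha\beta}},
\]
after which the proof is complete.
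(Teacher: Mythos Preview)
Your proof is correct and follows essentially the same route as the paper: introduce a weight $(1+t|x|^\alpha)^{1/\beta}$, apply H\"older with exponents $\beta/(\beta-1)$ and $\beta$, evaluate the weight integral in polar coordinates via the substitution $s=tr^\alpha$, and optimize in the free parameter. The only cosmetic difference is that the paper parametrizes the weight by $a^\alpha$ instead of your $t$, so the two arguments are related by the change of variable $t=a^\alpha$.
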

\begin{proof}
Let $t>0$. By H\"older's inequality, we get
\begin{align*}
\int_{\R^d} g(x)\,dx \ &= \ \int_{\R^d} \bigg(\frac{g(x)^\beta+g(x)^\beta t^\alpha |x|^\alpha}{1+t^\alpha|x|^\alpha}\bigg)^{1/\beta}dx \\
&\leq \ \bigg(\int_{\R^d}\frac{1}{(1+t^\alpha|x|^\alpha)^{1/(\beta-1)}}\,dx\bigg)^{\frac{\beta-1}{\beta}}\bigg(\int_{\R^d}\big(g(x)^\beta+g(x)^\beta t^\alpha|x|^\alpha\big)\,dx\bigg)^{1/\beta} \\
&= \ \frac{1}{t^{d\frac{\beta-1}{\beta}}}\bigg(\int_{\R^d}\frac{1}{(1+|x|^\alpha)^{1/(\beta-1)}}\,dx\bigg)^{\frac{\beta-1}{\beta}}\bigg(\int_{\R^d}\big(g(x)^\beta+g(x)^\beta t^\alpha|x|^\alpha\big)\,dx\bigg)^{1/\beta},
\end{align*}
where the last equality follows from the change of variable $x\rightarrow tx$ in the first integral. Now, denoting $F(t):=\frac{1}{t^{d\frac{\beta-1}{\beta}}}\big(\int_{\R^d}\big(g(x)^\beta+g(x)^\beta t^\alpha|x|^\alpha\big)\,dx\big)^{1/\beta}$, by \cite[formula 4.642]{GradshteinRyzhik} we find
\begin{align*}
\int_{\R^d} g(x)\,dx \ &\leq \ \bigg(\frac{2\pi^{d/2}}{\Gamma(\frac{d}{2})}\int_0^\infty\frac{r^{d-1}}{(1+r^\alpha)^{1/(\beta-1)}}\,dr\bigg)^{\frac{\beta-1}{\beta}}F(t) \\
&= \ \bigg(\frac{2\pi^{d/2}}{\Gamma(\frac{d}{2})}\frac{1}{\alpha}\int_0^\infty\frac{s^{\frac{d}{\alpha}-1}}{(1+s)^{1/(\beta-1)}}\,ds\bigg)^{\frac{\beta-1}{\beta}}F(t) \ = \ \bigg(\frac{2\pi^{d/2}}{\Gamma(\frac{d}{2})}\frac{1}{\alpha}I_{\alpha,\beta,d}\bigg)^{\frac{\beta-1}{\beta}}F(t),
\end{align*}
where the first equality follows from the change of variable $r\rightarrow s^{1/\alpha}$. Inequality \eqref{Carlson} follows by choosing $t^*$ minimizing the function $t\mapsto F(t)$, which is given by
\[
t^* \ = \ \bigg( \frac{d(\beta-1) \int_{\mathbb R^d} g(x)^\beta \, dx}{\big(\alpha - d(\beta-1)\big) \int_{\mathbb R^d} g(x)^\beta |x|^\alpha \, dx} \bigg)^{\frac{1}{\alpha}}.
\]
\end{proof}

\noindent The subsequent result is a consequence of the Marcinkiewicz-Zygmund inequality (see \cite[Theorem 2, Section 10.3]{ChowTeicher}).

\begin{Lemma}\label{L:MZ_Ineq}
Let $(\xi_k)_{k\geq1}$ be a sequence of independent identically distributed integrable real-valued random variables with $\mathbb E[\xi_k]=0$. Then, for $\beta\geq1$, if $\xi_1\in L^\beta$ it holds that
\begin{equation}\label{MZ_Ineq}
\bigg\|\sum_{k=1}^N \frac{\xi_k}{N}\bigg\|_{L^\beta} \ \leq \ \frac{C_\beta}{N^{\min\left(\frac{\beta-1}{\beta},\frac{1}{2}\right)}}\,\|\xi_1\|_{L^\beta},
\end{equation}
with $C_\beta=2\sqrt{[\beta/2]+1}$,  where $[\beta/2]$ is the integer part of $\beta/2$.
\end{Lemma}

\begin{Theorem}\label{T:RateSmooth}
Let $\mu\in\Pc(\R^d)$ and let $p\geq1$, $\sigma>0$. Assume that $M_q(\mu)<\infty$ for some $q>p$. Then, for any $1<\beta<\frac{q+d}{p+d}$,
\begin{equation}\label{RateSmooth}
\mathbb E\big[\big(\Wc_p^{(\sigma)}(\mu_N,\mu)\big)^p\big] \ \leq \ \frac{C_{\beta,\sigma}}{N^{\min\left(\frac{\beta-1}{\beta},\frac{1}{2}\right)}},
\end{equation}
where $C_{\beta,\sigma}$ is a non-negative constant depending on $p,q,d,\sigma,\beta$ and $M_q(\mu)$, see \eqref{EstimateFirstProof} and \eqref{EstimateSecondProof}.
\end{Theorem}
\begin{Remark}[On the exponent $\beta$ in \eqref{RateSmooth}]
Notice that, since $1<\beta<\frac{q+d}{p+d}$, the following inequalities hold:
\[
0 \ < \ \frac{\beta-1}{\beta} \ < \ \frac{q-p}{q+d}.
\]
Moreover, being $\beta<\frac{q+d}{p+d}$, we have
\begin{equation}\label{q>pbeta}
q \ > \ p\beta.
\end{equation}
\end{Remark}
\begin{Remark}[On the constant $C_{\beta,\sigma}$ in \eqref{RateSmooth}]
From estimate \eqref{EstimateFirstProof}, we obtain the following explicit formula for the constant $C_{\beta,\sigma}$ in \eqref{RateSmooth}:
\[
C_{\beta,\sigma} \ = \ \frac{2^pC_\beta C_{\alpha,\beta,d}}{(2\pi\sigma^2)^{d\frac{\beta-1}{2\beta}}}\bigg\{2^{q-1}M_q^q(\mu) + 2^{3q/2-1}\sigma^q\frac{\Gamma(\frac{q+d}{2})}{\Gamma(\frac{d}{2})}\bigg\}^{\frac{(p+d)\beta-d}{q\beta}},
\]
where $\alpha=q-p\beta$, $C_\beta$ is as in \eqref{MZ_Ineq}, $C_{\alpha,\beta,d}$ is as in \eqref{Carlson}. This formulation clarifies the dependence of $C_{\beta,\sigma}$ on $\sigma$. Specifically, there exist non-negative constants $C_1,C_2,C_3$, depending only on $p,q,d,\beta,M_q(\mu)$, such that
\[
C_{\beta,\sigma} \ = \ \frac{C_1}{\sigma^{d\frac{\beta-1}{\beta}}}\big\{C_2+C_3\sigma^q\big\}^{\frac{(p+d)\beta-d}{q\beta}}.
\]
\end{Remark}
\begin{Remark}\label{R:FG}
    Notice that $\Wc_p^{(\sigma)}(\mu,\nu)\leq\Wc_p(\mu,\nu)$, for every $\mu,\nu\in\mathcal P_p(\mathbb R^d)$. Thus using \cite[Theorem 1]{FG15} one also has the following upper bound, for some constant $C_{\textup{FG}}\geq0$, depending only on $p,d,q$:
    \[
    \mathbb E\big[\big(\Wc_p^{(\sigma)}(\mu_N,\mu)\big)^p\big] \leq C_{\textup{FG}}M_q^p(\mu)
    \begin{cases}
      \frac{1}{N^{\frac{1}{2}}} + \frac{1}{N^{\frac{q-p}{q}}}, \quad &\text{if }p>d/2\text{ and }q\neq2p, \\
      \frac{1}{N^{\frac{1}{2}}}\log(1+N) + \frac{1}{N^{\frac{q-p}{q}}}, \quad &\text{if }p=d/2\text{ and }q\neq2p, \\
      \frac{1}{N^{\frac{p}{d}}} + \frac{1}{N^{\frac{q-p}{q}}}, \quad &\text{if }p<d/2\text{ and }q\neq dp/(d-p). \\
    \end{cases}
    \]
    Observe that the above rates exhibit the curse of dimensionality when $p<d/2$ and $q>dp/(d-p)$, where the rate degrades to $N^{-p/d}$. By comparing these results with Theorem \ref{T:RateSmooth}, we see that Theorem \ref{T:RateSmooth} yields a better rate whenever $p\leq d/2$ and $q>2dp/(d-p)$. Moreover, this rate achieves the dimension-free bound $N^{-1/2}$ when $q>2p+d$ (note that if $p=d/2$, then $2dp/(d-p)=2p+d=4p$). Specifically:
\begin{itemize}
    \item If $p=d/2$, \cite[Theorem 1]{FG15} yields the exponent $1/2$ as soon as $q>2p$, but with an additional $\log(1+N)$ term; in contrast, Theorem \ref{T:RateSmooth} provides a $\log$-free $N^{-1/2}$ when $q>4p$ (recall that $2dp/(d-p)=4p$ when $p=d/2$);
    \item If $p<d/2$, \cite[Theorem 1]{FG15} yields the exponent $p/d$ when $q\geq dp/(d-p)$; however, Theorem \ref{T:RateSmooth} improves beyond $p/d$ precisely when $q>2dp/(d-p)$, reaching the dimension-free exponent $1/2$ when $q>2p+d$.
\end{itemize}
\end{Remark}

\begin{Remark}
We report two distinct proofs of Theorem \ref{T:RateSmooth}. However, it is worth noting that each proof provides a different constant, see \eqref{EstimateFirstProof} and \eqref{EstimateSecondProof}.
\end{Remark}
\begin{proof}[Proof (Theorem \ref{T:RateSmooth})]
We begin by stating the following estimate, which follows from \eqref{Dens_g} and Lemma \ref{L:UpperBoundDensities},
\begin{equation}\label{Estimate_W}
\mathbb E\big[\big(\Wc_p^{(\sigma)}(\mu_N,\mu)\big)^p\big] \ \leq \ 2^{p-1}\int_{\R^d} |x|^p\,\mathbb E\big[\big|g_N^\sigma(x)-g^\sigma(x)\big|\big]\,dx.
\end{equation}
In the sequel we denote by $X$ and $Z$ two independent random variables having distribution $\mu$ and $\Nc_1$, respectively.

\vspace{2mm}

\noindent\emph{1) Proof by Carlson-type inequality.} By \eqref{Estimate_W} and the Marcinkiewicz-Zygmund inequality \eqref{MZ_Ineq}, we have for $\beta>1$
\begin{align*}
\mathbb E\big[\big(\Wc_p^{(\sigma)}(\mu_N,\mu)\big)^p\big] \ &\leq \ 2^{p-1}\int_{\R^d} |x|^p\,\mathbb E\big[\big|g_N^\sigma(x)-g^\sigma(x)\big|^\beta\big]^{1/\beta}\,dx \\
&\leq \ 2^{p-1}\frac{C_\beta}{N^{\min\left(\frac{\beta-1}{\beta},\frac{1}{2}\right)}}\int_{\R^d} |x|^p\,\mathbb E\big[\big|\varphi^\sigma(x-X)-\mathbb E[\varphi^\sigma(x-X)]\big|^\beta\big]^{1/\beta}\,dx \\
&\leq \ 2^p\frac{C_\beta}{N^{\min\left(\frac{\beta-1}{\beta},\frac{1}{2}\right)}}\int_{\R^d} |x|^p\,\mathbb E\big[\big|\varphi^\sigma(x-X)\big|^\beta\big]^{1/\beta}\,dx.
\end{align*}
By Carlson-type inequality \eqref{Carlson}, we get, for $\beta>1$ such that $\beta<\frac{q+d}{p+d}$, $\alpha:=q-p\beta$ (recall from \eqref{q>pbeta} that $q>p\beta$) and $C_{p,\alpha,\beta,d}:=2^p C_\beta C_{\alpha,\beta,d}$ (where $C_{\alpha,\beta,d}$ is the constant of Carlson-type inequality \eqref{Carlson}),
\begin{align*}
&\mathbb E\big[\big(\Wc_p^{(\sigma)}(\mu_N,\mu)\big)^p\big] \notag \\
&\leq \ \frac{C_{p,\alpha,\beta,d}}{N^{\min\left(\frac{\beta-1}{\beta},\frac{1}{2}\right)}}\bigg(\int_{\R^d\times\R^d} |x|^{p\beta}|\varphi^\sigma(x-y)|^\beta\mu(dy)dx\bigg)^{\frac{\alpha-d(\beta-1)}{\alpha\beta}} \bigg(\int_{\R^d\times\R^d} |x|^q|\varphi^\sigma(x-y)|^\beta\mu(dy)dx\bigg)^{\frac{d(\beta-1)}{\alpha\beta}} \notag \\
&= \ \frac{C_{p,\alpha,\beta,d}}{N^{\min\left(\frac{\beta-1}{\beta},\frac{1}{2}\right)}}\bigg(\int_{\R^d\times\R^d} |x+y|^{p\beta}|\varphi^\sigma(y)|^\beta\mu(dx)dy\bigg)^{\frac{\alpha-d(\beta-1)}{\alpha\beta}} \bigg(\int_{\R^d\times\R^d} |x+y|^q|\varphi^\sigma(y)|^\beta\mu(dx)dy\bigg)^{\frac{d(\beta-1)}{\alpha\beta}} \notag \\
&= \ \frac{C_{p,\alpha,\beta,d}}{N^{\min\left(\frac{\beta-1}{\beta},\frac{1}{2}\right)}}\mathbb E\big[\big|X+\sigma Z\big|^{p\beta}\varphi^\sigma(\sigma Z)^{\beta-1}\big]^{\frac{\alpha-d(\beta-1)}{\alpha\beta}} \mathbb E\big[\big|X+\sigma Z\big|^q\varphi^\sigma(\sigma Z)^{\beta-1}\big]^{\frac{d(\beta-1)}{\alpha\beta}},
\end{align*}
where the second equality follows from the change of variable $(x,y)\rightarrow(x+y,x)$, while in the last equality we used that the random vector $(X,Y)=(X,\sigma Z)$ has distribution $\varphi^\sigma(y)dy\mu(dx)$. Since $\|\varphi^\sigma\|_\infty=(2\pi\sigma^2)^{-d/2}$, we find
\[
\mathbb E\big[\big(\Wc_p^{(\sigma)}(\mu_N,\mu)\big)^p\big] \ \leq \ \frac{C_{p,\alpha,\beta,d}}{N^{\min\left(\frac{\beta-1}{\beta},\frac{1}{2}\right)}}\frac{1}{(2\pi\sigma^2)^{d\frac{\beta-1}{2\beta}}}\mathbb E\big[\big|X+\sigma Z\big|^{p\beta}\big]^{\frac{\alpha-d(\beta-1)}{\alpha\beta}} \mathbb E\big[\big|X+\sigma Z\big|^q\big]^{\frac{d(\beta-1)}{\alpha\beta}}.
\]
By using Jensen's inequality, and then the elementary inequality $(a+b)^q\leq 2^{q-1}(a^q+b^q)$, for $a,b\geq0$, we obtain
\begin{align}
&\mathbb E\big[\big(\Wc_p^{(\sigma)}(\mu_N,\mu)\big)^p\big] \
\leq \ \frac{C_{p,\alpha,\beta,d}}{N^{\min\left(\frac{\beta-1}{\beta},\frac{1}{2}\right)}}\frac{1}{(2\pi\sigma^2)^{d\frac{\beta-1}{2\beta}}} \mathbb E\big[\big|X+\sigma Z\big|^q\big]^{\frac{p\beta}{q}\frac{\alpha-d(\beta-1)}{\alpha\beta} + \frac{d(\beta-1)}{\alpha\beta}} \notag \\
&= \ \frac{C_{p,\alpha,\beta,d}}{N^{\min\left(\frac{\beta-1}{\beta},\frac{1}{2}\right)}}\frac{1}{(2\pi\sigma^2)^{d\frac{\beta-1}{2\beta}}}\Big\{2^{q-1}\mathbb E\big[|X|^q\big] + 2^{q-1}\sigma^q\mathbb E\big[|Z|^q\big]\Big\}^{\frac{p\beta}{q}\frac{\alpha-d(\beta-1)}{\alpha\beta} + \frac{d(\beta-1)}{\alpha\beta}} \notag \\
&= \ \frac{C_{p,\alpha,\beta,d}}{N^{\min\left(\frac{\beta-1}{\beta},\frac{1}{2}\right)}}\frac{1}{(2\pi\sigma^2)^{d\frac{\beta-1}{2\beta}}}\bigg\{2^{q-1}M_q^q(\mu) + 2^{3q/2-1}\sigma^q\frac{\Gamma(\frac{q+d}{2})}{\Gamma(\frac{d}{2})}\bigg\}^{\frac{p\beta}{q}\frac{\alpha-d(\beta-1)}{\alpha\beta} + \frac{d(\beta-1)}{\alpha\beta}}, \label{EstimateFirstProof}
\end{align}
where the last equality follows from \eqref{Moments_N_1}.

\vspace{2mm}

\noindent\emph{2) Proof by dyadic partitioning argument.} We follow the dyadic partitioning
argument from \cite{FG15}. We begin by observing that, in the present context, since both measures $\mu_N*\Nc_\sigma$ and $\mu*\Nc_\sigma$ are absolutely continuous, the dyadic partitioning argument simplifies considerably. More precisely, the internal summation over $F$ in \cite[Lemma 6]{FG15} disappears, as it can be simply upper bounded by $\int_{B_n}|\frac{1}{N}\sum_{k=1}^N\varphi^\sigma(x-X_k)-\int_{\R^d}\varphi^\sigma(x-y)\mu(dy)|dx$, where $B_0=(-1,1]^d$ and $B_n=(-2^n,2^n]^d\backslash(-2^{n-1},2^{n-1}]^d$, for $n\geq1$. For this reason, only the summation over $n$ in \cite[Lemma 6]{FG15} becomes relevant. To upper bound $\mathbb E[(\Wc_p^{(\sigma)}(\mu_N,\mu))^p]$ by the summation over $n$, we proceed directly from \eqref{Estimate_W}:
\begin{align*}
\mathbb E\big[\big(\Wc_p^{(\sigma)}(\mu_N,\mu)\big)^p\big] \ &\leq \ 2^{p-1}\sum_{n\geq0}\int_{B_n} |x|^p\,\mathbb E\big[\big|g_N^\sigma(x)-g^\sigma(x)\big|\big]\,dx \\
&\leq \ 2^{p-1}d^{p/2}\sum_{n\geq0}2^{pn}\int_{B_n} \mathbb E\big[\big|g_N^\sigma(x)-g^\sigma(x)\big|\big]\,dx.
\end{align*}
Proceeding as at the beginning of the previous proof, by using the Marcinkiewicz-Zygmund inequality \eqref{MZ_Ineq}, we get, for $\beta\geq1$,
\begin{align*}
&\mathbb E\big[\big(\Wc_p^{(\sigma)}(\mu_N,\mu)\big)^p\big] \ \leq \ 2^{p-1}d^{p/2}\sum_{n\geq0}2^{pn}\int_{B_n} \mathbb E\big[\big|g_N^\sigma(x)-g^\sigma(x)\big|^\beta\big]^{1/\beta}\,dx \\
&\leq \ 2^p d^{p/2}\frac{C_\beta}{N^{\min\left(\frac{\beta-1}{\beta},\frac{1}{2}\right)}}\sum_{n\geq0}2^{pn}\int_{B_n} \mathbb E\big[\big|\varphi^\sigma(x-X)\big|^\beta\big]^{1/\beta}\,dx \\
&\leq \ 2^p d^{p/2}\frac{C_\beta}{N^{\min\left(\frac{\beta-1}{\beta},\frac{1}{2}\right)}}\sum_{n\geq0}2^{pn}\bigg(\int_{B_n} \mathbb E\big[\big|\varphi^\sigma(x-X)\big|^\beta\big]\,dx\bigg)^{1/\beta}\big(\lambda_d(B_n)\big)^{1-1/\beta},
\end{align*}
where $\lambda_d$ denotes the $d$-dimensional Lebesgue measure. Since $\lambda_d(B_n)\leq2^d2^{dn}$, it holds that
\begin{align*}
&\mathbb E\big[\big(\Wc_p^{(\sigma)}(\mu_N,\mu)\big)^p\big] \leq \frac{2^{p+d(1-1/\beta)} d^{p/2}C_\beta}{N^{\min\left(\frac{\beta-1}{\beta},\frac{1}{2}\right)}}\sum_{n\geq0}2^{pn+dn(1-1/\beta)}\bigg(\int_{B_n} \mathbb E\big[\big|\varphi^\sigma(x-X)\big|^\beta\big]\,dx\bigg)^{1/\beta} \\
&= \frac{2^{p+d(1-1/\beta)} d^{p/2}C_\beta}{N^{\min\left(\frac{\beta-1}{\beta},\frac{1}{2}\right)}}\sum_{n\geq0}2^{pn+dn(1-1/\beta)}\bigg(\int_{\R^d\times\R^d} \big|\varphi^\sigma(x-x')\big|^\beta\mathbbm{1}_{B_n}(x)\mu(dx')dx\bigg)^{1/\beta} \\
&= \frac{2^{p+d(1-1/\beta)} d^{p/2}C_\beta}{N^{\min\left(\frac{\beta-1}{\beta},\frac{1}{2}\right)}}\sum_{n\geq0}2^{pn+dn(1-1/\beta)}\bigg(\int_{\R^d\times\R^d} \big|\varphi^\sigma(x')\big|^\beta\mathbbm{1}_{B_n}(x+x')\mu(dx)dx'\bigg)^{1/\beta} \\
&= \frac{2^{p+d(1-1/\beta)} d^{p/2}C_\beta}{N^{\min\left(\frac{\beta-1}{\beta},\frac{1}{2}\right)}}\sum_{n\geq0}2^{pn+dn(1-1/\beta)}\,\mathbb E\Big[\big|\varphi^\sigma(\sigma Z)\big|^{\beta-1}\mathbbm{1}_{\{X+\sigma Z\in B_n\}}\Big]^{1/\beta},
\end{align*}
where the second equality follows from the change of variable $(x,x')\rightarrow(x+x',x)$, while in the last equality we used that the random vector $(X,X')=(X,\sigma Z)$ has distribution $\varphi^\sigma(x')dx'\mu(dx)$. Since $\|\varphi^\sigma\|_\infty=(2\pi\sigma^2)^{-d/2}$, we find
\begin{align*}
\mathbb E\big[\big(\Wc_p^{(\sigma)}(\mu_N,\mu)\big)^p\big] &\leq  \frac{2^{p+d(1-1/\beta)} d^{p/2}C_\beta}{N^{\min\left(\frac{\beta-1}{\beta},\frac{1}{2}\right)}(2\pi\sigma^2)^{d\frac{\beta-1}{2\beta}}}\sum_{n\geq0}2^{pn+dn(1-1/\beta)}\,\P(X+\sigma Z\in B_n)^{1/\beta} \\
&= \frac{2^{p+d(1-1/\beta)} d^{p/2}C_\beta}{N^{\min\left(\frac{\beta-1}{\beta},\frac{1}{2}\right)}(2\pi\sigma^2)^{d\frac{\beta-1}{2\beta}}}\sum_{n\geq0}2^{pn+dn(1-1/\beta)}\Big(\P\big(|X+\sigma Z|_\infty\leq2^n\big)\mathbbm{1}_{\{n=0\}} \\
&\quad + \P\big(2^{n-1}<|X+\sigma Z|_\infty\leq2^n\big)\mathbbm{1}_{\{n\geq1\}}\Big)^{1/\beta},
\end{align*}
where $|\cdot|_\infty$ denotes the infinity norm on $\R^d$. Then
\begin{align*}
&\mathbb E\big[\big(\Wc_p^{(\sigma)}(\mu_N,\mu)\big)^p\big] \\
&\leq \frac{2^{p+d(1-1/\beta)} d^{p/2}C_\beta}{N^{\min\left(\frac{\beta-1}{\beta},\frac{1}{2}\right)}(2\pi\sigma^2)^{d\frac{\beta-1}{2\beta}}}\sum_{n\geq0}2^{pn+dn(1-1/\beta)}\Big(\mathbbm{1}_{\{n=0\}} + \P\big(|X+\sigma Z|_\infty>2^{n-1}\big)\mathbbm{1}_{\{n\geq1\}}\Big)^{1/\beta} \\
&\leq \frac{2^{p+d(1-1/\beta)} d^{p/2}C_\beta}{N^{\min\left(\frac{\beta-1}{\beta},\frac{1}{2}\right)}(2\pi\sigma^2)^{d\frac{\beta-1}{2\beta}}}\sum_{n\geq0}2^{pn+dn(1-1/\beta)}\Big(\mathbbm{1}_{\{n=0\}} + \Big(\P\big(\sigma|Z|_\infty>2^{n-2}\big) \\
&\quad + \P\big(|X|_\infty>2^{n-2}\big)\Big)\mathbbm{1}_{\{n\geq1\}}\Big)^{1/\beta}.
\end{align*}
Recall that $Z\sim\Nc_1$. Denote by $Z^{(1)},\ldots,Z^{(d)}$ the components of $Z$, that is $Z=(Z^{(1)},\ldots,Z^{(d)})$. Then, 
\[
    \P(\sigma|Z|_\infty>2^{n-2}) \ \leq \ d\,\P(\sigma|Z^{(1)}|>2^{n-2}) \ = \ 2d\,\P(\sigma Z^{(1)}>2^{n-2}) \ \leq \ 2d\,\text{e}^{-\frac{1}{\sigma^2}2^{2n-5}},
\]
where the last inequality follows from the standard one-dimensional Gaussian tail upper bound
$\mathbb P(Z^{(1)}\ge u)\le \text{e}^{-u^2/2}$, for $u>0$ (see e.g. \cite[Theorem 2.1]{BLM13}).
Moreover, since $|X|_\infty\leq|X|$, where $|\cdot|$ denotes the Euclidean norm on $\R^d$, by Markov's inequality we get
\begin{align*}
&\mathbb E\big[\big(\Wc_p^{(\sigma)}(\mu_N,\mu)\big)^p\big] \\
&\leq \frac{2^{p+d(1-1/\beta)} d^{p/2}C_\beta}{N^{\min\left(\frac{\beta-1}{\beta},\frac{1}{2}\right)}(2\pi\sigma^2)^{d\frac{\beta-1}{2\beta}}}\sum_{n\geq0}2^{pn+dn(1-1/\beta)}\bigg(\!\mathbbm{1}_{\{n=0\}} + \bigg(2d\,\text{e}^{-\frac{1}{\sigma^2}2^{2n-5}} + \frac{M_q^q(\mu)}{2^{q(n-2)}}\!\bigg)\!\mathbbm{1}_{\{n\geq1\}}\!\bigg)^{1/\beta} \\
&\leq \frac{2^{p+d(1-1/\beta)} d^{p/2}C_\beta}{N^{\min\left(\frac{\beta-1}{\beta},\frac{1}{2}\right)}(2\pi\sigma^2)^{d\frac{\beta-1}{2\beta}}}\sum_{n\geq0}2^{pn+dn(1-1/\beta)}\bigg(\mathbbm{1}_{\{n=0\}} + \bigg(\!(2d)^{1/\beta}\text{e}^{-\frac{1}{\beta\sigma^2}2^{2n-5}} + \frac{M_q^{q/\beta}(\mu)}{2^{\frac{q}{\beta}(n-2)}}\bigg)\mathbbm{1}_{\{n\geq1\}}\!\bigg).
\end{align*}
In particular, the last inequality follows from $(a+b)^{1/\beta}\leq a^{1/\beta}+b^{1/\beta}$, for $a,b\geq0$ (recall that $\beta>1$). Since $\beta<\frac{q+d}{p+d}$, the above series converges and we end up with
\begin{align}
\mathbb E\big[\big(\Wc_p^{(\sigma)}(\mu_N,\mu)\big)^p\big] \ &\leq \ \frac{2^{p+d(1-1/\beta)} d^{p/2}C_\beta}{N^{\min\left(\frac{\beta-1}{\beta},\frac{1}{2}\right)}(2\pi\sigma^2)^{d\frac{\beta-1}{2\beta}}}\bigg\{\frac{2^{\frac{2q}{\beta}}}{1 - 2^{p+d - \frac{q+d}{\beta}}}M_q^{q/\beta} (\mu)\label{EstimateSecondProof} \\
&\quad \ + \sum_{n\geq0}2^{pn+dn(1-1/\beta)}\bigg(\mathbbm{1}_{\{n=0\}} + (2d)^{1/\beta}\text{e}^{-\frac{1}{\beta\sigma^2}2^{2n-5}}\mathbbm{1}_{\{n\geq1\}}\bigg)\bigg\}. \notag
\end{align}
\end{proof}

\noindent Having established in Theorem \ref{T:RateSmooth} that $\mathbb E[(\Wc_p^{(\sigma)}(\mu_N,\mu))^p]$ decays at a rate $O(N^{-1/2})$ whenever $\mu$ possesses sufficiently high moments ($q > 2p+d$), a natural question arises regarding the optimality of this result. As discussed in Remark \ref{R:FG}, this dimension-free rate represents a significant improvement over the classical Wasserstein distance, which suffers from the curse of dimensionality when $p < d/2$. Theorem \ref{T:SharpRate} establishes the existence of a probability measure for which the convergence rate is bounded from below by $N^{-1/2-\varepsilon}$, for any $\varepsilon > 0$.

\begin{Theorem}\label{T:SharpRate}
There exists $\mu\in \bigcap_{q\ge 1}\mathcal P_q(\mathbb R^d)$ such that, for any $\varepsilon>0$,
\[
\mathbb E\big[\big(\Wc_p^{(\sigma)}(\mu_N,\mu)\big)^p\big]\ \ge\  C\,N^{-1/2-\varepsilon},\qquad\text{for all }N\ge N_\varepsilon,
\]
for some constants $C>0$ (depending only on $p$) and $N_\varepsilon\in\N$ (depending only on $\varepsilon,\sigma,d$).
\end{Theorem}

\noindent For the proof of Theorem \ref{T:SharpRate} we need the following three technical results.

\begin{Lemma}[Neighborhood lower bound]\label{L:Neighborhood}
Let $p\ge 1$ and $\mu,\nu\in\mathcal P_p(\mathbb R^d)$. Then for every Borel set $B\subset\mathbb R^d$ and every $r>0$,
\[
\mathcal W_p(\mu,\nu)^p \ \ge\ r^p\big(\mu(B)-\nu(B^{(r)})\big)^+,
\]
where $(u)^+=\max\{u,0\}$ and $B^{(r)}$ denotes the closed $r$-neighborhood of $B$, that is $B^{(r)}:=\{x\in\mathbb R^d\colon \mathrm{dist}(x,B)\le r\}$, with $\mathrm{dist}$ denoting the Euclidean distance.
\end{Lemma}
\begin{proof}
Fix $B$ and $r>0$. Let $\pi\in\mathcal P_p(\mathbb R^d\times\mathbb R^d)$ be a coupling of $\mu$ and $\nu$. Then $\mu(B)=\pi(B\times \mathbb R^d)=\pi(B\times B^{(r)})+\pi(B\times (\mathbb R^d\setminus B^{(r)}))$. Moreover $\nu(B^{(r)})=\pi(\mathbb R^d\times B^{(r)})\ge\pi(B\times B^{(r)})$. Hence $\mu(B)-\nu(B^{(r)})\le \pi(B\times (\mathbb R^d\setminus B^{(r)}))$. On the set
$B\times (\mathbb R^d\setminus B^{(r)})$ we have $|x-y|>r$, so that
\[
\int_{\mathbb R^d\times\mathbb R^d}|x-y|^p\,\pi(dx,dy)
\ \ge\ r^p\,\pi\big(B\times (\mathbb R^d\setminus B^{(r)})\big)
\ \ge\ r^p\big(\mu(B)-\nu(B^{(r)})\big).
\]
Taking the positive part and then the infimum over all couplings $\pi$ yields the claim.
\end{proof}

\begin{Lemma}[Gaussian tail upper bound]\label{L:GaussianTail}
Let $\sigma Z\sim \mathcal N_\sigma$. Then for every $t>0$,
\[
\mathbb P(\sigma|Z|\ge t)\ \le\ 2d\,\textup{e}^{-\frac{t^2}{2d\sigma^2}}.
\]
\end{Lemma}
\begin{proof}
If $\sigma|Z|\ge t$ and $Z=(Z^{(1)},\ldots,Z^{(d)})$, then there exists some coordinate $j\in\{1,\dots,d\}$ such that $\sigma|Z^{(j)}|\ge t/\sqrt d$. Thus $\mathbb P(\sigma|Z|\ge t)\le \sum_{j=1}^d \mathbb P(\sigma|Z^{(j)}|\ge t/\sqrt d)
= d\,\mathbb P(\sigma|Z^{(1)}|\ge t/\sqrt d)$. Then, the claim follows from the standard one-dimensional Gaussian tail upper bound
$\mathbb P(|Z^{(1)}|\ge u)\le 2\text{e}^{-u^2/2}$, for $u>0$ (see e.g. \cite[Theorem 2.1]{BLM13}).
\end{proof}

\begin{Lemma}[On the binomial distribution]\label{L:Binomial}
There exist constants $v_0>0$ and $c_0>0$ such that the following property holds:
for every $n\in\mathbb N$ and every $p\in(0,1)$ satisfying $np(1-p)\ge v_0$,
if $X\sim\mathrm{Bin}(n,p)$, the binomial distribution with parameters $n$ and $p$, then
\[
\mathbb P\Big(X-np\ge \tfrac12\sqrt{np(1-p)}\Big)\ \ge\ c_0.
\]
\end{Lemma}
\begin{proof}
Let $(X_i)_{i=1}^n$ be i.i.d. Bernoulli$(p)$ random variables and let $X=\sum_{i=1}^n X_i$. Let $\Phi$ denote the standard normal cumulative distribution function. Let also $\rho:=\mathbb E[|X_1-p|^3]=p(1-p)^3+(1-p)p^3$. By the Berry-Esseen theorem, there exists a constant $C_{\mathrm{BE}}>0$ (independent of $n$ and $p$) such that, for all $x\in\mathbb R$,
\[
\bigg|\mathbb P\bigg(\frac{X-np}{\sqrt{np(1-p)}}\le x\bigg)-\Phi(x)\bigg|
\ \le \ \frac{C_{\mathrm{BE}}\,\rho}{\sqrt{np^3(1-p)^3}}
\ = \ \frac{C_{\mathrm{BE}}\big((1-p)^2+p^2\big)}{\sqrt{np(1-p)}}
\ \le \ \frac{C_{\mathrm{BE}}}{\sqrt{np(1-p)}}.
\]
Taking $x=1/2$ yields
\[
\mathbb P\bigg(\frac{X-np}{\sqrt{np(1-p)}} \ge \tfrac12\bigg)
\ \ge \ 1-\Phi(\tfrac12)-\frac{C_{\mathrm{BE}}}{\sqrt{np(1-p)}}.
\]
Choose $v_0$ large enough so that $C_{\mathrm{BE}}/\sqrt{v_0}\le (1-\Phi(1/2))/2$, and set
$c_0:=(1-\Phi(1/2))/2>0$. Then the above probability is at least $c_0$ whenever $np(1-p)\ge v_0$.
\end{proof}

\begin{proof}[Proof (of Theorem~\ref{T:SharpRate})]
We split the proof into six steps.

\vspace{2mm}

\noindent\textsc{Step I.} \emph{Construction of $\mu$.} Let $e_1=(1,0,\dots,0)\in\mathbb R^d$. For $k\ge 1$, set $a_k:=2^{-k^2}$ and $x_k:=2^k e_1$. Since $k^2\ge 2k$ for $k\ge 2$, we have $\sum_{k=1}^\infty a_k\le 2^{-1}+\sum_{k=2}^\infty 2^{-2k}<1$. Set $a_0:=1-\sum_{k=1}^\infty a_k>0$ and define the probability measure
\begin{equation}\label{eq:mu-def}
\mu \ := \ a_0\,\delta_0+\sum_{k=1}^\infty \frac{a_k}{2}\big(\delta_{x_k}+\delta_{-x_k}\big).
\end{equation}
Notice that $\mu$ has moments of all orders, i.e. $\mu\in \bigcap_{q\ge 1}\mathcal P_q(\mathbb R^d)$. In fact, fix $q\ge 1$. Using \eqref{eq:mu-def} and $|x_k|=2^k$, we get $\int_{\mathbb R^d}|x|^q\mu(dx)=\sum_{k=1}^\infty a_k |x_k|^q=\sum_{k=1}^\infty 2^{-(k^2-qk)}<\infty$.\\
Now, for $N\ge 16$ define $L_N:=\log_2 N$ and $k_N:=[\log_2 L_N]$, the integer part of $\log_2 L_N$. Then $2^{k_N}\le L_N<2^{k_N+1}$, hence
\begin{equation}\label{eq:2k-asymp-logN}
\frac{L_N}{2} \ \le \ 2^{k_N} \ \le \ L_N.
\end{equation}
Then, let $x_N:=x_{k_N}=2^{k_N}e_1$, $r_N:=2^{k_N-3}=2^{-3}|x_N|$, and let
$B_N:=\{x\in\mathbb R^d\colon |x-x_N|\leq r_N\}$, the ball of radius $r_N$ centered at $x_N$. Moreover, denote the mass at the atom $x_N$ by $w_N:=\mu(\{x_N\})=a_{k_N}/2=2^{-k_N^2-1}$. Finally, let $(X_i)_i$ be a sequence of i.i.d. random variables having distribution $\mu$. Then, define $S_N:=\sum_{i=1}^N \mathbf 1_{\{X_i=x_N\}}$ and $W_N:=\mu_N(\{x_N\})=S_N/N$, where we recall that $\mu_N = \frac{1}{N}\sum_{i=1}^N\delta_{X_i}$. Notice that $S_N\sim \mathrm{Bin}(N,w_N)$.

\vspace{2mm}

\noindent\textsc{Step II.} \emph{A lower bound for $\mu_N^\sigma(B_N)-\mu^\sigma(B_N^{(r_N)})$.} Write $\mu^\sigma:=\mu*\mathcal N_\sigma$ and $\mu_N^\sigma:=\mu_N*\mathcal N_\sigma$.
We first lower bound $\mu_N^\sigma(B_N)$ in terms of $W_N$. To this end, let $\sigma Z\sim \mathcal N_\sigma$ be independent of $(X_i)_i$. Set $\varepsilon_N:=\mathbb P(\sigma|Z|>r_N)$. Notice that
\[
\mathbb P(X_i+\sigma Z\in B_N\mid X_i=x_N) \ = \ \mathbb P(\sigma|Z|\le r_N) \ = \ 1-\varepsilon_N.
\]
Then, by \eqref{Dens_g} we find
\begin{equation}\label{eq:emp-mass-lb}
\mu_N^\sigma(B_N) \ = \ \frac{1}{N}\sum_{i=1}^N \int_{B_N}\varphi^\sigma(x-X_i)\,dx \ = \ \frac1N\sum_{i=1}^N \mathbb P(X_i+\sigma Z\in B_N\mid X_i)
\ \ge \ (1-\varepsilon_N)\,W_N,
\end{equation}
where in the last inequality we used that $\mathbb P(X_i+\sigma Z\in B_N\mid X_i\neq x_N)\geq0$. Next, we upper bound $\mu^\sigma(B_N^{(r_N)})$, where $B_N^{(r_N)}:=\{x\in\mathbb R^d\colon |x-x_N|\leq2r_N\}$ . Observe that $2r_N=2^{k_N-2}$. For any point $y$ in the support of $\mu$ other than $x_N$,
we have $|y-x_N|\ge 2^{k_N-1}$; indeed, $x_{k_N-1}=2^{k_N-1}e_1$ is the nearest distinct atom on the same ray, and all other atoms (including $0$ and the negative ones) are even farther. Therefore, for every $y\in\textup{supp}(\mu)\setminus\{x_N\}$, it holds that: $\mathrm{dist}\big(y,B_N^{(r_N)}\big)\ge 2^{k_N-1}-2r_N = 2^{k_N-2}$. Consequently, $\mathbb P(y+\sigma Z\in B_N^{(r_N)}) \le \mathbb P(\sigma|Z|\ge 2^{k_N-2})$. Using that the total $\mu$-mass outside $\{x_N\}$ is $1-w_N$, we obtain
\begin{equation}\label{eq:true-mass-ub}
\mu^\sigma(B_N^{(r_N)}) \ \le \ w_N + \mathbb P(\sigma|Z|\ge 2^{k_N-2}).
\end{equation}
Define $\delta_N:=\mathbb P(\sigma|Z|\ge 2^{k_N-2})$. Combining \eqref{eq:emp-mass-lb} and \eqref{eq:true-mass-ub}, yields the lower bound
\begin{equation}\label{eq:mass-diff}
\mu_N^\sigma(B_N)-\mu^\sigma(B_N^{(r_N)}) \ \ge \ (1-\varepsilon_N)W_N - w_N - \delta_N.
\end{equation}

\vspace{2mm}

\noindent\textsc{Step III.} \emph{On the probability $\mathbb P(W_N-w_N\ge c_1\sqrt{w_N/N})$.} We claim that there exist an integer $N_0\in\mathbb N$ (depending only on $d,\sigma$) such that, for all $N\ge N_0$,
\begin{equation}\label{eq:event-prob}
\mathbb P\bigg(W_N-w_N\ge c_1\sqrt{\frac{w_N}{N}}\bigg) \ \ge \ c_0,
\end{equation}
with $c_0$ as in Lemma \ref{L:Binomial} and $c_1:=\sqrt{3}/4$.
To prove this, note first that $w_N\le 1/4$ for all $N\ge 16$, hence $1-w_N\ge 3/4$.
Next observe that, by definition of $k_N$, $\log_2 (N w_N) = \log_2 N - (k_N^2+1)
\ge \log_2 N - (\log_2\log_2 N)^2 - 2 \rightarrow\infty$.
In particular, $N w_N(1-w_N)\to\infty$ as $N\to\infty$.\\
There exists $N_0\in\mathbb N$, independent of $\sigma,p,d$, such that $N w_N(1-w_N)\ge v_0$ for all $N\ge N_0$, where $v_0$ is from Lemma~\ref{L:Binomial}.
Then, by Lemma~\ref{L:Binomial} applied to $S_N\sim\mathrm{Bin}(N,w_N)$, we obtain
\[
\mathbb P\Big(S_N-Nw_N\ge \tfrac12\sqrt{Nw_N(1-w_N)}\Big) \ \ge \ c_0,
\qquad\text{for all }N\ge N_0.
\]
On this event, $W_N-w_N=\frac{S_N-Nw_N}{N}\ge \frac12\sqrt{\frac{w_N(1-w_N)}{N}}
\ge \frac{\sqrt3}{4}\sqrt{\frac{w_N}{N}}$, from which \eqref{eq:event-prob} follows.

\vspace{2mm}

\noindent\textsc{Step IV.} \emph{Upper bound for $\varepsilon_N+\delta_N$.} We now show that the error terms $\varepsilon_N$ and $\delta_N$ in \eqref{eq:mass-diff} are negligible compared to $\sqrt{w_N/N}$.
By Lemma~\ref{L:GaussianTail} and $r_N=2^{k_N-3}$, we have $\varepsilon_N=\mathbb P(\sigma|Z|>r_N)\le 2d\exp(-2^{2k_N-6}/(2d\sigma^2)) \le 2d\exp(-c\,4^{k_N})$, for a constant $c=c(d,\sigma)>0$. Similarly, $\delta_N=\mathbb P(\sigma|Z|\ge 2^{k_N-2})\le 2d\exp(-c\,4^{k_N})$. Using \eqref{eq:2k-asymp-logN}, we also have $4^{k_N}=(2^{k_N})^2\ge L_N^2/4$, hence $\varepsilon_N+\delta_N \le 4d\exp(-\frac{c}{4}(\log_2 N)^2)$. On the other hand, $\sqrt{w_N/N}= N^{-1/2}\,2^{-(k_N^2+1)/2}
= \exp(-\frac12\log N - \frac{\log 2}{2}(k_N^2+1))$, which is much larger than $4d\exp(-\frac{c}{4}(\log_2 N)^2)$ for large $N$. Therefore, there exists $N_1\ge N_0$ (depending only on $d,\sigma$) such that, for all $N\ge N_1$,
\begin{equation}\label{eq:error-small}
\varepsilon_N+\delta_N \ \le \ \frac{c_1}{4}\sqrt{\frac{w_N}{N}},
\end{equation}
where $c_1=\sqrt{3}/4$ is as in \textsc{Step}~III.

\vspace{2mm}

\noindent\textsc{Step V.} \emph{Lower bound for $\mathcal W_p^{(\sigma)}(\mu_N,\mu)$.} Fix $N\ge N_1$ and consider the event
\[
E_N:=\Big\{W_N-w_N\ge c_1\sqrt{\frac{w_N}{N}}\Big\}.
\]
By \eqref{eq:event-prob}, $\mathbb P(E_N)\ge c_0$, for all $N\ge N_1$.
On $E_N$, we have $W_N \ge w_N + c_1\sqrt{w_N/N}$.
Then, by \eqref{eq:mass-diff}, on the event $E_N$ we obtain
\[
\mu_N^\sigma(B_N)-\mu^\sigma(B_N^{(r_N)})
\ge (1-\varepsilon_N)\Big(w_N+c_1\sqrt{\frac{w_N}{N}}\Big)-w_N-\delta_N
= c_1\sqrt{\frac{w_N}{N}}-\varepsilon_N\Big(w_N+c_1\sqrt{\frac{w_N}{N}}\Big)-\delta_N.
\]
Since $w_N+c_1\sqrt{w_N/N}\le 1$, we further have
\[
\mu_N^\sigma(B_N)-\mu^\sigma(B_N^{(r_N)})
\ \ge \ c_1\sqrt{\frac{w_N}{N}}-(\varepsilon_N+\delta_N)
\ \ge \ \frac{3c_1}{4}\sqrt{\frac{w_N}{N}}
\ \ge \ \frac{c_1}{2}\sqrt{\frac{w_N}{N}},
\]
for all $N\ge N_1$, where we used \eqref{eq:error-small} in the second inequality.
Applying Lemma~\ref{L:Neighborhood}, gives
\[
\mathcal W_p(\mu_N^\sigma,\mu^\sigma)^p \
\ge \ r_N^p\Big(\mu_N^\sigma(B_N)-\mu^\sigma(B_N^{(r_N)})\Big)^+
\ \ge \ \frac{c_1}{2}\,r_N^p\sqrt{\frac{w_N}{N}},
\qquad\text{on }E_N.
\]
Taking expectations, using $\mathbb P(E_N)\ge c_0$ and $\mathcal W_p^{(\sigma)}(\mu_N,\mu)=\mathcal W_p(\mu_N^\sigma,\mu^\sigma)$, yields, for all $N\ge N_1$,
\begin{equation}\label{eq:main-lb-N}
\mathbb E\Big[\big(\mathcal W_p^{(\sigma)}(\mu_N,\mu)\big)^p\Big] \
\ge \ \frac{c_1c_0}{2}\,r_N^p\sqrt{\frac{w_N}{N}}
\ = \ C\,N^{-1/2}\,2^{k_N p-\frac12 k_N^2},
\end{equation}
where $C:=\frac{c_1c_0}{2}\,2^{-3p-1/2}>0$, depends only on $p$.

\vspace{2mm}

\noindent\textsc{Step VI.} \emph{From \eqref{eq:main-lb-N} to $N^{-1/2-\varepsilon}$.} Fix $\varepsilon>0$. Because $(\log\log N)^2=o(\log N)$ as $N\to\infty$, there exists $N_\varepsilon\ge N_1$ such that for all $N\ge N_\varepsilon$, $\frac12(\log_2\log_2 N)^2 \le \varepsilon\,\log_2 N$. Since $k_N\le \log_2\log_2 N$, this implies $2^{-k_N^2/2}\ge N^{-\varepsilon}$ and therefore
$2^{k_N p-k_N^2/2}\ge N^{-\varepsilon}$ for all $N\ge N_\varepsilon$.
Thus, for all $N\ge N_\varepsilon$,
\[
\mathbb E\Big[\big(\mathcal W_p^{(\sigma)}(\mu_N,\mu)\big)^p\Big]
 \ \ge \ C\,N^{-1/2}\,2^{k_N p-\frac12 k_N^2}
 \ \ge \ C\,N^{-1/2-\varepsilon}.
\]
\end{proof}

\section{Wasserstein distance}

This section investigates the rate of convergence for the expected $p$-th power of the $p$-Wasserstein distance $\mathbb E[\mathcal W_p^p(\mu_N,\mu)]$ when $\mu\in\mathcal P(\mathbb R^d)$ satisfies  $M_p(\mu)<\infty$, but no moment of order $q>p$ is assumed, that is $M_q(\mu)=\infty$ for every $q>p$. To clarify the presentation, in Remark \ref{R:RateG2} and Corollary \ref{C:Zygmund} we consider the specific case where $\mu$ satisfies, for some $p\in[1,\infty)$ and $\alpha>0$,
\begin{equation}\label{Zygmund}
\int_{\mathbb R^d}|x|^p\big(\log(1+|x|)\big)^\alpha\,\mu(dx) \ < \ \infty.
\end{equation}
If $X\sim\mu$, then $X$ belongs to the Zygmund space $L^p(\log L)^\alpha$. In such a context, we obtain the rate of convergence \eqref{UpperBoundWassZygmund}.\\
We begin with the following technical result.
\begin{Lemma}\label{L:RateG}
Let $p\geq1$ and let $\mu\in\mathcal P(\mathbb R^d)$ be such that $M_p(\mu)<\infty$, but $M_q(\mu)=\infty$ for every $q>p$. Then, there exists a Borel measurable function $G_\mu\colon[0,+\infty)\rightarrow[0,+\infty)$, satisfying:
\begin{enumerate}[\upshape 1)]
\item $G_\mu>0$ on $(0,+\infty)$;
\item $\int_{\mathbb R^d}G_\mu(|x|)\mu(dx)<+\infty$;
\item $\lim_{t\rightarrow+\infty}\frac{G_\mu(t)}{t^p}=+\infty$;
\item $t\mapsto\frac{G_\mu(t)}{t^p}$ is non-decreasing on $[0,+\infty)$;
\item it holds that
\[
\lim_{t\rightarrow+\infty}\frac{G_\mu(t)}{t^q} \ = \ 0, \qquad\text{if }q>p.
\]
\end{enumerate}
Finally, if $G_\mu\colon[0,+\infty)\rightarrow[0,+\infty)$ is a Borel measurable function satisfying items 1)-2)-3)-4)-5), then, for every $c>0$, it holds that
\begin{equation}\label{UI_p}
\mathbb E\Big[|X|^p \mathbbm{1}_{\{|X|\geq c\}}\Big] \ \leq \ \frac{c^p}{G_\mu(c)} \mathbb E\Big[G_\mu(|X|)\mathbbm{1}_{\{|X|\geq c\}}\Big] \ \leq \ \frac{c^p}{G_\mu(c)} \mathbb E[G_\mu(|X|)].
\end{equation}
\end{Lemma}
\begin{proof}
Let $X\sim\mu$ and consider the function $H_\mu\colon [0, +\infty) \to (0, +\infty)$ defined as follows:
\begin{equation}\label{H_mu}
H_\mu(t) \ = \ \int_t^{+\infty} p s^{p-1} \mathbb{P}(|X| > s) \, ds, \qquad t\geq0.
\end{equation}
Note that $H_\mu(0) = \mathbb{E}[|X|^p] < +\infty$. Since $X \notin L^q$ for any $q > p$, the random variable $X$ is essentially unbounded, so $\mathbb{P}(|X| > s) > 0$ for all $s$, implying $H_\mu(t) > 0$ for all $t \ge 0$. Now, we define the function $G_\mu\colon [0, +\infty) \to [0, +\infty)$ as
\begin{equation}\label{G_mu}
G_\mu(t) \  = \int_0^t\frac{p s^{p-1}}{\sqrt{H_\mu(s)}} \, ds, \qquad t\geq0.
\end{equation}
It can be easily verified that this function satisfies all the above five conditions.\\
It remains to prove \eqref{UI_p}. Let $c>0$ be fixed and let $a>0$ be such that $\frac{G_\mu(t)}{t^p}\geq a$ for $t\geq c$. Then, $|X|^p\leq G_\mu(|X|)/a$ on $\{|X|\geq c\}$, and so
\[
\mathbb E\Big[|X|^p\mathbbm{1}_{\{|X|\geq c\}}\Big] \ \leq \ \frac{1}{a} \mathbb E\Big[G_\mu(|X|)\mathbbm{1}_{\{|X|\geq c\}}\Big].
\]
Then \eqref{UI_p} follows by taking $a=\frac{G_\mu(c)}{c^p}$, which is possible due to the non-decreasing monotonicity of $t\mapsto\frac{G_\mu(t)}{t^p}$.
\end{proof}

\begin{Remark}\label{R:RateG2}
Let $p\geq1$ and $\alpha>0$. Let $\mu\in\mathcal P(\mathbb R^d)$ be such that \eqref{Zygmund} holds, that is $X\sim\mu$ belongs to the Zygmund space $L^p(\log L)^\alpha$. In such a case, the function
\begin{equation}\label{G_muZygmund}
G_\mu(t) \ = \ t^p\big(\log(1+t)\big)^\alpha, \qquad t\geq0,
\end{equation}
satisfies all the five conditions of Lemma \ref{L:RateG}. In such a context, using the function $G_\mu$ in \eqref{G_muZygmund} we can find the rate of convergence \eqref{UpperBoundWassZygmund}.
\end{Remark}

\begin{Theorem}\label{T:RateG}
Let $\mu\in\Pc(\mathbb R^d)$ be such that $M_p(\mu)<\infty$ for some $p\geq1$ and let $X\sim\mu$. Let $G_\mu\colon[0,+\infty)\rightarrow[0,+\infty)$ be a Borel measurable function satisfying items 1)-2)-3)-4)-5) of Lemma \ref{L:RateG}. Then, for every $\varepsilon>0$, it holds that
\begin{equation}\label{UpperBoundWass}
\mathbb E\big[\Wc_p^p(\mu_N,\mu)\big] \ \leq \ C\Big(1 + \mathbb E\big[G_\mu(|X|)\big]\Big)\frac{N^{p\gamma_\varepsilon}}{G_\mu(N^{\gamma_\varepsilon})} + O\bigg(\frac{N^{p\gamma_\varepsilon}}{G_\mu(N^{\gamma_\varepsilon})}\bigg(\frac{G_\mu(N^{\gamma_\varepsilon})}{N^{(p+\varepsilon)\gamma_\varepsilon}}\bigg)^{\frac{p+d}{p}}\bigg),
\end{equation}
where $\gamma_\varepsilon=\frac{p}{2(p+\varepsilon)(p+d)}$ and $C=\max\{2^{p-1}C_{p,d}^p,2^{3p-2}\}$, with $C_{p,d}=2^{3/2}\left(\frac{\Gamma(\frac{p+d}{2})}{\Gamma(\frac{d}{2})}\right)^{1/p}$.
\end{Theorem}
\begin{Remark}
By condition 5) of Lemma \ref{L:RateG}, it follows that
\[
\frac{N^{p\gamma_\varepsilon}}{G_\mu(N^{\gamma_\varepsilon})}\bigg(\frac{G_\mu(N^{\gamma_\varepsilon})}{N^{(p+\varepsilon)\gamma_\varepsilon}}\bigg)^{\frac{p+d}{p}} \ = \ o\bigg(\frac{N^{p\gamma_\varepsilon}}{G_\mu(N^{\gamma_\varepsilon})}\bigg).
\]
\end{Remark}
\begin{proof}
We begin by finding a suitable upper bound for the Gaussian-smoothed Wasserstein distance between $\mu_N$ and $\mu$. Fix $\varepsilon>0$ and let $\gamma=\frac{p}{2(p+\varepsilon)(p+d)}$(to alleviate the presentation we denote $\gamma_\varepsilon$ simply as $\gamma$). By \eqref{Dens_g}, Lemma \ref{L:UpperBoundDensities}, and using that $g^\sigma(x)=\mathbb E[g_N^\sigma(x)]$, we have
\begin{align*}
&\mathbb E\big[\big(\Wc_p^{(\sigma)}(\mu_N,\mu)\big)^p\big] \ \leq \ 2^{p-1}\int_{\mathbb R^d} |x|^p\,\mathbb E\big[\big|g_N^\sigma(x)-g^\sigma(x)\big|\big]\,dx \\
&= \ 2^{p-1}\int_{\mathbb R^d} |x|^p\,\mathbb E\big[\big|g_N^\sigma(x)-g^\sigma(x)\big|\big]\,\big(\mathbbm{1}_{\{|x|\leq 2N^\gamma\}}+\mathbbm{1}_{\{|x|> 2N^\gamma\}}\big)\,dx \\
&\leq \ 2^{p-1}\int_{\mathbb R^d} |x|^p\,\sqrt{\text{Var}\big(g_N^\sigma(x)\big)}\,\mathbbm{1}_{\{|x|\leq 2N^\gamma\}}\,dx 
 + 2^p\int_{\mathbb R^d}|x|^p\bigg(\int_{\mathbb R^d}\varphi^\sigma(x-y)\,\mu(dy)\bigg)\mathbbm{1}_{\{|x|>2N^\gamma\}}\,dx\qquad \\
&\leq \ \frac{2^{p-1}}{\sqrt{N}}\int_{\mathbb R^d} |x|^p\,\sqrt{\text{Var}(\varphi^\sigma(x-X))}\,\mathbbm{1}_{\{|x|\leq 2N^\gamma\}}\,dx + 2^p\int_{\mathbb R^d\times\mathbb R^d}|y+z|^p\,\varphi^\sigma(z)\,\mathbbm{1}_{\{|y+z|>2N^\gamma\}}\,\mu(dy)\,dz,
\end{align*}
where $X\sim\mu$ and in the last integral we performed the change of variable $(x,y)\rightarrow(y+z,y)$. Now, let $Z\sim\Nc_1$ be independent of $X$. Then, the last integral can be written as $\mathbb E[|X+\sigma Z|^p\mathbbm 1_{\{|X+\sigma Z|>2N^\gamma\}}]$. Now, using that $\sqrt{\text{Var}(\varphi^\sigma(x-X))}\leq\sqrt{\mathbb E[(\varphi^\sigma(x-X))^2]}\leq(2\pi\sigma^2)^{-d/2}$, we find
\begin{align*}
&\mathbb E\big[\big(\Wc_p^{(\sigma)}(\mu_N,\mu)\big)^p\big] \ \leq \ \frac{2^{p-1}}{\sqrt{N}}\frac{1}{(2\pi)^{d/2}\sigma^d}\int_{\mathbb R^d}|x|^p\,\mathbbm{1}_{\{|x|\leq 2N^\gamma\}}\,dx \ + \ 2^p\,\mathbb E\big[|X+\sigma Z|^p\,\mathbbm{1}_{\{|X+\sigma Z|>2N^\gamma\}}\big] \\
&\leq \ \frac{2^{p-1}}{\sqrt{N}}\frac{1}{(2\pi)^{d/2}\sigma^d}\int_{\mathbb R^d}|x|^p\,\mathbbm{1}_{\{|x|\leq 2N^\gamma\}}\,dx \ + \ 2^{2p-1}\Big\{\mathbb E\big[|X|^p\,\mathbbm{1}_{\{|X|>N^\gamma\}}\big] + \mathbb E\big[|X|^p\,\mathbbm{1}_{\{\sigma|Z|>N^\gamma\}}\big] \\
&\quad \ + \sigma^p\mathbb E\big[|Z|^p\,\mathbbm{1}_{\{|X|>N^\gamma\}}\big] + \sigma^p\mathbb E\big[|Z|^p\,\mathbbm{1}_{\{\sigma|Z|>N^\gamma\}}\big]\Big\} \\
&= \ \frac{1}{\sqrt{N}}\frac{2^{2p+d/2}}{\sigma^d(d+p)\Gamma(\frac{d}{2})} N^{\gamma(p+d)} + 2^{2p-1}\Big\{\mathbb E\big[|X|^p\,\mathbbm{1}_{\{|X|>N^\gamma\}}\big] + M_p^p(\mu)\,\P(\sigma|Z|>N^\gamma) \\
&\quad \ + \sigma^p M_p^p(\Nc_1)\,\P(|X|>N^\gamma) + \sigma^p\mathbb E\big[|Z|^p\,\mathbbm{1}_{\{\sigma|Z|>N^\gamma\}}\big]\Big\},
\end{align*}
where in the second inequality we used $(a+b)^p\leq 2^{p-1}(a^p+b^p)$, $a,b\geq0$; on the other hand, in the last equality we used that both expectations $\mathbb E[|X|^p\mathbbm 1_{\{\sigma|Z|>N^\gamma\}}]$, $\mathbb E[|Z|^p\mathbbm 1_{\{|X|>N^\gamma\}}]$ factor as a consequence of the independence of $X$ and $Z$, moreover we wrote $\int_{\mathbb R^d}|x|^p\mathbbm{1}_{\{|x|\leq 2N^\gamma\}}dx=\int_0^{2N^\gamma} r^{p+d-1} \frac{2\pi^{d/2}}{\Gamma(\frac{d}{2})}dr=\frac{2^{1+p+d}\pi^{d/2} N^{\gamma(p+d)}}{(p+d)\Gamma(\frac{d}{2})}$. By using H\"older's inequality, we find, for every $p'>p$,
\begin{align*}
&\mathbb E\big[\big(\Wc_p^{(\sigma)}(\mu_N,\mu)\big)^p\big] \\
&\leq \ \frac{1}{\sqrt{N}}\frac{2^{2p+d/2}}{\sigma^d(d+p)\Gamma(\frac{d}{2})} N^{\gamma(p+d)}  + 2^{2p-1}\Big\{\mathbb E\big[|X|^p\,\mathbbm{1}_{\{|X|>N^\gamma\}}\big] + M_p^p(\mu)\,\P(\sigma|Z|>N^\gamma) \\
&\quad \ + \sigma^p M_p^p(\Nc_1)\,\P(|X|>N^\gamma) + \sigma^p M_{p'}^p(\Nc_1)\,\P(\sigma|Z|>N^\gamma)^{1-p/p'}\Big\} \\
&= \ \frac{1}{\sqrt{N}}\frac{2^{2p+d/2}}{\sigma^d(d+p)\Gamma(\frac{d}{2})} N^{\gamma(p+d)} + 2^{2p-1}\bigg\{\mathbb E\big[|X|^p\,\mathbbm{1}_{\{|X|>N^\gamma\}}\big] + M_p^p(\mu)\,\P(\sigma|Z|>N^\gamma) \\
&\quad \ + \sigma^p 2^{p/2}\frac{\Gamma(\frac{p+d}{2})}{\Gamma(\frac{d}{2})}\P(|X|>N^\gamma) + \sigma^p2^{p/2}\bigg(\frac{\Gamma(\frac{p'+d}{2})}{\Gamma(\frac{d}{2})}\bigg)^{p/p'}\P(\sigma|Z|>N^\gamma)^{1-p/p'}\bigg\},
\end{align*}
where the last equality follows from \eqref{Moments_N_1}. Then, by using Lemma \ref{L:GaussianTail}, \eqref{UI_p}, and Markov's inequality, we get
\begin{align*}
&\mathbb E\big[\big(\Wc_p^{(\sigma)}(\mu_N,\mu)\big)^p\big] \ \leq \ \frac{1}{\sqrt{N}}\frac{2^{2p+d/2}}{\sigma^d(d+p)\Gamma(\frac{d}{2})} N^{\gamma(p+d)} + 2^{2p-1}\bigg\{\frac{N^{p\gamma}}{G_\mu(N^\gamma)}\mathbb E[G_\mu(|X|)] \\
&+ 2d M_p^p(\mu)\text{e}^{-\frac{1}{2d\sigma^2}N^{2\gamma}} + \sigma^p2^{p/2}\frac{\Gamma(\frac{p+d}{2})}{\Gamma(\frac{d}{2})}\frac{\mathbb E[G_\mu(|X|)]}{G_\mu(N^\gamma)} + \sigma^p2^{p/2}\bigg(\frac{\Gamma(\frac{p'+d}{2})}{\Gamma(\frac{d}{2})}\bigg)^{p/p'}(2d)^{1-p/p'}\text{e}^{-\frac{1-p/p'}{2d\sigma^2}N^{2\gamma}}\bigg\}.
\end{align*}
Now, let us focus on the Wasserstein distance. Recalling Lemma \ref{L:Bound}, we find
\begin{align*}
&\mathbb E\big[\Wc_p^p(\mu_N,\mu)\big] \ \leq \ 2^{p-1}C_{p,d}^p\,\sigma^p+2^{p-1}\mathbb E\big[\big(\Wc_p^{(\sigma)}(\mu_N,\mu)\big)^p\big] \ \leq \ 2^{p-1}C_{p,d}^p\,\sigma^p \\
&+ \frac{1}{\sqrt{N}}\frac{2^{3p+d/2-1}}{\sigma^d(d+p)\Gamma(\frac{d}{2})} N^{\gamma(p+d)} + 2^{3p-2}\bigg\{\frac{N^{p\gamma}}{G_\mu(N^\gamma)}\mathbb E[G_\mu(|X|)] + 2dM_p^p(\mu)\text{e}^{-\frac{1}{2d\sigma^2}N^{2\gamma}} \\
&+ \sigma^p2^{p/2}\frac{\Gamma(\frac{p+d}{2})}{\Gamma(\frac{d}{2})}\frac{\mathbb E[G_\mu(|X|)]}{G_\mu(N^\gamma)} + \sigma^p2^{p/2}\bigg(\frac{\Gamma(\frac{p'+d}{2})}{\Gamma(\frac{d}{2})}\bigg)^{p/p'}(2d)^{1-p/p'}\text{e}^{-\frac{1-p/p'}{2d\sigma^2}N^{2\gamma}}\bigg\}.
\end{align*}
Let us set $\sigma=\frac{N^\gamma}{G_\mu(N^\gamma)^{1/p}}$. Then, we obtain, for any $p'>p$,
\begin{align}
&\mathbb E\big[\mathcal W_p^p(\mu_N,\mu)\big] \notag \\
&\leq \ 2^{p-1}C_{p,d}^p\,\frac{N^{p\gamma}}{G_\mu(N^\gamma)} + \frac{N^{p\gamma}G_\mu(N^\gamma)^{d/p}}{\sqrt{N}}\frac{2^{3p+d/2-1}}{(d+p)\Gamma(\frac{d}{2})} + 2^{3p-2}\bigg\{\frac{N^{p\gamma}}{G_\mu(N^\gamma)}\mathbb E[G_\mu(|X|)] \notag \\
&\quad \ + 2dM_p^p(\mu)\text{e}^{-\frac{1}{2d}G_\mu(N^\gamma)^{2/p}} + 2^{p/2}\frac{N^{p\gamma}}{G_\mu(N^\gamma)^2}\frac{\Gamma(\frac{p+d}{2})}{\Gamma(\frac{d}{2})}\mathbb E[G_\mu(|X|)] \label{lot} \\
&\quad \ + \frac{N^{p\gamma}}{G_\mu(N^\gamma)}2^{p/2}\bigg(\frac{\Gamma(\frac{p'+d}{2})}{\Gamma(\frac{d}{2})}\bigg)^{p/p'}(2d)^{1-p/p'}\text{e}^{-\frac{1-p/p'}{2d}G_\mu(N^\gamma)^{2/p}}\bigg\} \notag \\
&= \ \Big(2^{p-1}C_{p,d}^p + 2^{3p-2}\mathbb E[G_\mu(|X|)]\Big)\frac{N^{p\gamma}}{G_\mu(N^\gamma)} + \frac{N^{p\gamma}G_\mu(N^\gamma)^{d/p}}{\sqrt{N}}\frac{2^{3p+d/2-1}}{(d+p)\Gamma(\frac{d}{2})} + o\bigg(\frac{N^{p\gamma}}{G_\mu(N^\gamma)}\bigg). \notag
\end{align}
Recalling that $\gamma=\frac{p}{2(p+\varepsilon)(p+d)}$, we obtain
\[
\frac{N^{p\gamma}G_\mu(N^\gamma)^{d/p}}{\sqrt{N}} \ = \ \frac{N^{p\gamma}}{G_\mu(N^\gamma)}\frac{G_\mu(N^\gamma)^{\frac{p+d}{p}}}{\sqrt{N}} \ = \ \frac{N^{p\gamma}}{G_\mu(N^\gamma)}\bigg(\frac{G_\mu(N^\gamma)}{N^{(p+\varepsilon)\gamma}}\bigg)^{\frac{p+d}{p}},
\]
from which the claim follows.
\end{proof}

\begin{Corollary}\label{C:Zygmund}
Let $p\geq1$ and $\alpha>0$. Let $\mu\in\mathcal P(\mathbb R^d)$ be such that \eqref{Zygmund} holds, that is $X\sim\mu$ belongs to the Zygmund space $L^p(\log L)^\alpha$. Then, for every $\varepsilon>0$, it holds that
\begin{equation}\label{UpperBoundWassZygmund}
\mathbb E\big[\Wc_p^p(\mu_N,\mu)\big] \ \leq \ C\frac{1 + \mathbb E\big[|X|^p\big(\log(1+|X|)\big)^\alpha\big]}{(\log(1+N^{\gamma_\varepsilon}))^\alpha} + o\bigg(\frac{1}{(\log(1+N^{\gamma_\varepsilon}))^\alpha}\bigg),
\end{equation}
where $\gamma_\varepsilon=\frac{p}{2(p+\varepsilon)(p+d)}$ and $C=\max\{2^{p-1}C_{p,d}^p,2^{3p-2}\}$, with $C_{p,d}=2^{3/2}\left(\frac{\Gamma(\frac{p+d}{2})}{\Gamma(\frac{d}{2})}\right)^{1/p}$.
\end{Corollary}
\begin{proof}
The claim follows directly from Theorem \ref{T:RateG} taking $G_\mu$ as in \eqref{G_muZygmund}.
\end{proof}

\vspace{5mm}

\footnotesize
\noindent\textbf{Acknowledgements.} \ A. Cosso acknowledges support from GNAMPA-INdAM (of which L. Perelli and M. Martini are also members), the MUR project PRIN 2022 ``Entropy martingale optimal transport and McKean-Vlasov equations'', and the MUR project PRIN 2022 PNRR ``Probabilistic methods for energy transition''. M. Martini acknowledges the financial support of the European Research Council (ERC) under the European Union's Horizon Europe research and innovation program (AdG ELISA project, Grant Agreement No. 101054746). Views and opinions expressed are however those of the author(s) only and do not necessarily reflect those of the European Union or the European Research Council Executive Agency. Neither the European Union nor the granting authority can be held responsible for them.

\small
\bibliographystyle{plain}
\bibliography{references}

\end{document}